\newtheorem{thm}{Theorem}[section]
 \newtheorem{prop}[thm]{Proposition}
\theoremstyle{remark}
\begin{document}

\title[Cahn-Hilliard equations]{A regularization-free approach to the Cahn-Hilliard equation
with logarithmic potentials}

\author[D. Li]{Dong Li}
\address{SUSTech International Center for Mathematics, and Department of Mathematics,  Southern University of Science and Technology, Shenzhen, P.R. China}%
\email{lid@sustech.edu.cn}

\subjclass{35Q35}


\begin{abstract}
We introduce a regularization-free approach for the wellposedness of
 the classic Cahn-Hilliard equation with logarithmic potentials. 
\end{abstract}
\maketitle
\section{Introduction}
Consider the 2D Cahn-Hilliard equation
\begin{align} \label{1}
\begin{cases}
\partial_t u = \Delta \mu=\Delta ( - \nu \Delta u  + F^{\prime}(u) ), 
\qquad (t, x) \in (0,\infty) \times \Omega; \\
u\bigr|_{t=0}=u_0,
\end{cases}
\end{align}
where $\mu$ denotes the chemical potential, and 
$u$ is the order parameter which corresponds to the rescaled local concentration
in a binary mixture. For simplicity we shall  take the domain $\Omega=[-\frac 12,\frac 12)^2$  as a periodic torus in
dimension two and note that other boundary conditions can also be covered with suitable modifications.
 We set the coefficient $\nu>0$ as a constant, although in general, 
it depends on the order parameter. The thermodynamic potential $F:\, (-1,1 )
\to \mathbb R$ is given by (see \cite{CH58})
\begin{align*}
&F(u) = \frac {\theta} 2 \Bigl(
(1+u)\ln (1+u) + (1-u) \ln (1-u) \Bigr) -\frac{\theta_c} 2 u^2,  \quad 0<\theta<\theta_c;\\
&f(u)=F^{\prime}(u)= -\theta_c u + \frac {\theta}2 \ln \frac{1+u}{1-u}, \qquad
F^{\prime\prime}(u) = \frac {\theta}{1-u^2} - \theta_c. 
\end{align*}
Denote by $u_+>0$ the positive root of the equation $\frac 1 u \ln \frac {1+u}{1-u} =
\frac {2\theta_c} {\theta}$.  The potential $F$ takes the form of  a double-well with two equal minima
at $u_+$ and $-u_+$ which are usually called binodal points. For $u_s= (1-\frac {\theta}{\theta_c} )^{\frac 12}$, the region $(-u_s, u_s)$   where $F^{\prime\prime}(u)<0$ is called the spinodal interval.
If $\theta$ is close to $\theta_c$, i.e., the quenching is shallow, one can expand near $u=0$ and obtain
the usual quartic polynomial approximation of the free energy as
\begin{align*}
F(u) &= - \frac {\theta_c} 2 u^2 + \theta \sum_{k=0}^{\infty}
\frac {u^{2k+2} } {(2k+1)(2k+2)} \notag \\
&
\approx F_{\operatorname{quartic}}(u)= \frac{\theta}2 \cdot \frac {u^4}6 
+(\frac{\theta}2  -\frac {\theta_c}2 )u^2.
\end{align*}
Alternatively, one can use $1/(1-u^2) \approx 1+u^2$ to derive $F^{\prime\prime}(u) 
\approx \theta( 1+u^2) - \theta_c$. 
The standard double-well
potential $\operatorname{const} \cdot (u^2-1)^2$ corresponds to the choice $\theta/\theta_c=3/4$.

The system \eqref{1} is a gradient flow of a Ginzburg-Landau (GL) type energy
functional $\psi(u) $ in $H^{-1}$, i.e., 
\begin{align*}
& \partial_t u= - \frac {\delta \psi} {\delta u } \Bigr|_{H^{-1}} =
\Delta (\frac {\delta \psi} {\delta u}\Bigr|_{L^2} ), \\
\end{align*}
where $\frac {\delta \psi}{\delta u} \Bigr|_{H^{-1}}$ , $\frac {\delta \psi}
{\delta u}\Bigr|_{L^2}$ denote the standard variational derivatives in $H^{-1}$ and $L^2$ respectively, and
\begin{align*}
&\psi(u)= \int_{\Omega} ( \frac 12 \nu |\nabla u|^2 + F(u) ) dx.
\end{align*}
Here the gradient term in the GL energy accounts for short range interactions in the material. It is derived by
an approximation of a nonlocal term representing long range interactions \cite{CH58}. A rigorous derivation of the nonlocal Cahn-Hilliard equation dates back to the work of Giacomin and Lebowitz \cite{GL97, GL98},
which considered a lattice gas model with long range Kac potentials. Further results such as regularity and traveling waves on these and similar
models can be found in \cite{BH05, GZ03, Wang97} and the references therein. 

Concerning the logarithmic Cahn-Hilliard equation with constant mobility,  Elliott and Luckhaus in \cite{EL91} 
considered the case of a multi-component mixture and proved (for the Neumann boundary condition) that if $u_0 \in H^1$ satisfies $\|u_0\|_{\infty}\le 1$ with space average in $(-1,1)$, then
there exists a unique global solution $u \in C_t^0 H^{-1} \cap L_t^{\infty} H^1_x$,
$\partial_t u \in L^2_{t,\operatorname{loc}} H^{-1}_x$, $\sqrt t \partial_t u \in L_{t,\operatorname{loc}}^2 H^1_x$  and $\|u \|_{\infty} \le 1$. Furthermore,
the set $\{|u|=1\}$ has measure zero. The key idea in \cite{EL91} is to work with a regularized problem
where the logarithmic term is replaced by
\begin{align*}
\phi_{\epsilon} (r) = \begin{cases}
\ln r, \qquad r \ge \epsilon;\\
\ln \epsilon -1 + \frac {r} {\epsilon}, \quad r<\epsilon.
\end{cases}
\end{align*}
In \cite{DD95}, Debussche and Dettori adopted a different regularization of $F(u)$
\begin{align*}
F_N(u) = - \frac {\theta_c} 2 u^2 + \theta \sum_{k=0}^{N}
\frac {u^{2k+2} } {(2k+1)(2k+2)}. 
\end{align*}
For $L^2$ or $H^1$ initial data $u_0$ with $\|u_0\|_{\infty} \le 1$ and $m(u_0)\in (-1,1)$ (with Neumann or periodic boundary conditions), they proved the
existence and uniqueness of solutions as well as  continuity of the semigroup. 
In \cite{MZ04} Miranwille and Zelik introduced another approximation by using viscous  Cahn-Hilliard
equations, namely 
\begin{align*}
\begin{cases}
\epsilon \partial_t u + (-\Delta)_N^{-1} \partial_t u 
=\Delta u - f(u) + \langle f(u) \rangle, \quad \epsilon >0;\\
\partial_n u\Bigr|_{\partial \Omega}=0,
\end{cases}
\end{align*}
where $\langle v \rangle := |\Omega|^{-1} \int_{\Omega} v(x) dx$ and
$(-\Delta)_N^{-1}$ denotes the inverse Laplacian with Neumann boundary
conditions acting on $L_0^2(\Omega)= \{v \in L^2(\Omega):\, \langle v \rangle =0 \}$. 
In \cite{AW07} Abels and Wilke  used a different approach based on the powerful theory of monotone operators. It is worthwhile
pointing out that to show the subgradient $\partial F(c)$ is single-valued (see Theorem 4.3
on P3183 of \cite{AW07} and the proof) one still needs some suitable approximation of the potential by smooth ones 
(since the derivative goes to $\pm \infty$ at the end-points) and carefully derive the limits. In a related work \cite{Ken95} Kenmochi,
Niezg\'odka and Pawlow studied a general version of Cahn-Hilliard equation involving a multivalued mapping
by using a subdifferential operator approach. The approach in \cite{Ken95} is also based on
several approximation procedures using smoothed equations and potentials.

In this work, we introduce yet another approach for the analysis of \eqref{1}.  Our goal is very modest, namely to construct smooth global  solutions to \eqref{1} without using the aforementioned various regularization
procedures, and thus the name regularization-free. To keep this note short,  we leave out completely the analysis of long time behavior, issues with attractors, and more recent works on the dynamical
boundary conditions, all of which can be found in the excellent survey paper \cite{CMZ11}. 
See also  \cite{l21,lt21,lqt21} for some recent advances in the numerical analysis of Cahn-Hilliard
equations and related phase field models.

We now explain the main idea. As was noted before, the potential term $f(u)$ has
logarithmic singularities and blows up as $u\to \pm 1$. To alleviate this, we make a change of variable
\begin{align*}
g = \frac 12 \ln \frac {1+u}{1-u}, \qquad u = \frac {e^{g} -e^{-g}} {e^g +e^{-g}} = \tanh g.
\end{align*}
One should note that as long as $\| g\|_{\infty}<\infty$ we can guarantee  $\|u \|_{\infty}<1$ which
corresponds to strict phase separation.  The governing equation for $g$ takes the form
\begin{align*}
\partial_t g = -\Delta^2 g + O( \partial ( e^{C_1g}  F_1(\tanh g, g,\partial g) \partial^2 g ) )+
O(\partial^2( e^{C_2 g} F_2(\tanh g, g,\partial g)))+\cdots,
\end{align*}
where $C_1$, $C_2$ are constants, $F_1$ and $F_2$ are polynomials, and $\cdots$ represent similar terms. The local wellposedness and uniqueness 
is then a breeze thanks to the use of mild solutions (see Proposition \ref{prop_model_1}).  Roughly
speaking, the main
result of this note is the following.  

\begin{thm} \label{t0}
Let $g_0 \in H^2(\Omega)$ and recall $\Omega= [-\frac 12, \frac 12)^2$ is the periodic torus in 2D.
Then there exists a unique global solution $g \in C_t^0 H^2$. Moreover, for any $t>0$, $g(t, \cdot)\in  H^k$
for all $k \ge 2$. In particular, there is strict phase separation for all $t>0$. 
\end{thm}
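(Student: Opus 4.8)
The plan is to build on the local theory for the $g$-equation (Proposition~\ref{prop_model_1}) and to globalize it by combining the strong dissipation of the biharmonic principal part with the gradient-flow (Lyapunov) structure inherited from \eqref{1}. I would first record the precise form of the $g$-equation arising from $u=\tanh g$: writing $s=\operatorname{sech}^2 g=1-u^2$ and $\partial_t g=\cosh^2 g\,\Delta\mu$ with $\mu=-\nu\Delta u+f(u)$, a direct computation gives $f(u)=\theta g-\theta_c u$, and every factor $1/s=\cosh^2 g$ produced by the prefactor is compensated upon expansion by a matching $\nabla s=-2us\,\nabla g$ or $\Delta s$, so that the leading term collapses to a constant-coefficient biharmonic operator and
\begin{align*}
\partial_t g=-\nu\Delta^2 g+\mathcal N(g),
\end{align*}
where $\mathcal N(g)$ collects bounded polynomials in $u=\tanh g$ times derivatives of $g$ of order at most three, together with the genuinely exponential lower-order term $\theta\cosh^2 g\,\Delta g$ coming from the linear part $\theta g$ of $f$. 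Proposition~\ref{prop_model_1} then furnishes a unique maximal mild solution $g\in C^0_tH^2$ on $[0,T^*)$ with the continuation criterion that $T^*<\infty$ forces $\limsup_{t\to T^*}\|g(t)\|_{H^2}=\infty$; since $H^2(\Omega)\hookrightarrow L^\infty(\Omega)$ in two dimensions, $g_0$ (hence $u_0$) is already strictly separated, and the solution stays separated as long as it exists.

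The heart of the argument is an a priori bound excluding $\limsup_{t\to T^*}\|g\|_{H^2}=\infty$ at finite $T^*$. I would \emph{not} try to close a bare $H^2$ estimate: testing $-\nu\Delta^2 g+\mathcal N(g)$ against $\Delta^2 g$, the term $\theta\cosh^2 g\,\Delta g$ (and the cubic-coefficient third-order terms, controlled by interpolation between $\dot H^2$ and $\dot H^4$) yields after Young's inequality a remainder $C\,e^{C'\|g\|_{H^2}}\|g\|_{H^2}^2$, whose exponential coefficient permits finite-time blow-up in Gronwall. Instead I would exploit the gradient-flow structure in the $u$ variable, where along \eqref{1} one has the Lyapunov identity $\tfrac{d}{dt}\psi(u)=-\|\nabla\mu\|_{L^2}^2$; since $F$ is bounded on $[-1,1]$ this gives $\sup_t\|\nabla u\|_{L^2}\le C$ and $\int_0^\infty\|\nabla\mu\|_{L^2}^2\,dt\le\psi(u_0)$, while mass is conserved with $m(u_0)\in(-1,1)$. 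Standard Cahn--Hilliard bookkeeping then upgrades these to an $L^2_t$ bound on $\|\mu\|_{H^1}$.

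The crucial step is to convert this into an $L^\infty$-in-time bound on $\|g\|_\infty$, i.e.\ a quantitative strict-separation estimate. I would read $-\nu\Delta u+f(u)=\mu$ at fixed time as an elliptic equation and test against $|g|^{q-2}g$: the monotonicity of $u\mapsto\theta\operatorname{arctanh}(u)=\theta g$ makes the Laplacian contribution nonnegative, yielding $\|g\|_{L^q}\lesssim\|\mu\|_{L^q}+1$ uniformly in $q$. Feeding in the two-dimensional Gagliardo--Nirenberg/Trudinger--Moser bound $\|\mu\|_{L^q}\lesssim\sqrt q\,\|\mu\|_{H^1}$ together with the $L^2_t$ control of $\|\mu\|_{H^1}$ only places $g$ in the Orlicz class $e^{L^2}$, which is not $L^\infty$; the passage to $\|g(t)\|_\infty\le M$ for $t\ge\tau>0$ then requires a De Giorgi-type iteration on the super-level sets of $g$ in space-time, using the full parabolic regularity rather than a single fixed-time estimate. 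This is precisely the two-dimensional strict-separation problem, and I expect it to be the main obstacle: the $\sqrt q$ loss is borderline in 2D, and the physical energy fundamentally fails to control $\|g\|_\infty$ on its own.

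Once $\|g\|_\infty$ is bounded on every $[\tau,T]$, the coefficients $\cosh^2 g$, $\operatorname{sech}^2 g$ and their $g$-derivatives are controlled by the $H^k$ norms of $g$ via Moser composition estimates, so the problematic remainder above becomes $C(\|g\|_\infty)\|g\|_{H^2}^2$ and Gronwall closes the global $H^2$ bound; with the continuation criterion this gives the global solution $g\in C^0_tH^2$. Finally, for the smoothing claim I would bootstrap on the Duhamel formula $g(t)=e^{-t\nu\Delta^2}g_0+\int_0^t e^{-(t-s)\nu\Delta^2}\mathcal N(g)(s)\,ds$, moving the derivatives in $\mathcal N$ onto the biharmonic semigroup and invoking $\|e^{-t\nu\Delta^2}\|_{H^s\to H^{s+r}}\lesssim t^{-r/4}$; with the $L^\infty$ bound on $g$ in hand this successively yields $g(t)\in H^3,H^4,\dots$ for every $t>0$, and $H^2\hookrightarrow L^\infty$ turns these bounds into strict phase separation $\|u(t)\|_\infty<1$ for all $t>0$.
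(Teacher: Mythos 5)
Your local theory, your use of the Lyapunov identity, and your elliptic testing of $K=-\nu\Delta u+f(u)$ against $|g|^{q-2}g$ (exploiting the favorable sign of the $\Delta u$ contribution) all match the paper. But the globalization has a genuine gap exactly where you yourself flag ``the main obstacle'': the energy identity only gives $\nabla\mu\in L^2_tL^2_x$, so the Trudinger--Moser chain $\|g\|_{L^q}\lesssim\|\mu\|_{L^q}+1\lesssim\sqrt q\,\|\mu\|_{H^1}+1$ is available only for almost every time with an $L^2_t$ coefficient, and the De Giorgi iteration you invoke to upgrade this to $\|g(t)\|_\infty\le M$ is neither carried out nor obviously closable in this singular setting. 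As written, the proposal does not prove the theorem.

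The idea you are missing is the paper's Step 2: the chemical potential $K=\mu$ itself satisfies the parabolic equation
\begin{align*}
K_t=-\nu\Delta^2K-\theta_c\Delta K+\frac{\theta}{1-u^2}\Delta K,
\end{align*}
and upon testing with $-\Delta K$ the singular term contributes $-\theta\int(1-u^2)^{-1}(\Delta K)^2\,dx\le 0$, so it can simply be discarded; combined with the $L^2_{t,x}$ bound on $\nabla K$ from the energy law this yields the uniform-in-time bound $\|\nabla K\|_{L^\infty_tL^2_x}\lesssim1$. With that in hand (plus control of $\bar K$ and $\bar g$ via conservation of $\bar u\in(-1,1)$, which you gesture at but do not execute), your own elliptic estimate gives $\|g\|_{L^p}\lesssim\sqrt p$ uniformly in time, hence $\|e^{C|g|}\|_{\infty-}\lesssim1$ and control of $(1-u^2)^{-1}$ and its derivatives; a Duhamel representation of the $K$-equation over a unit time interval then gives $\|K\|_{L^\infty_{t,x}}\lesssim1$, and a maximum-principle argument applied to $K=-\nu\Delta u-\theta_cu+\theta g$ gives $\|g\|_{L^\infty_{t,x}}\lesssim1$ and the uniform $H^2$ bound. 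No De Giorgi iteration is needed. Your final smoothing and bootstrap paragraph is consistent with the paper once this uniform $L^\infty$ bound is established.
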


The proof of Theorem \ref{t0} is subsumed in Proposition \ref{prop_model_1} and Theorem
\ref{thm2}.
Here to keep
the argument light, we choose to work with subcritical data having $H^2$ regularity. 
To continue the local solution for all time we make use of the conservation law in conjunction with
a bootstrap argument. This part of the argument is technical, and details are presented in the proof
of Theorem \ref{thm2}.

\subsection*{Notation.} 
For any real number $a\in \mathbb R$, we denote by $a+$ the quantity $a+\epsilon$ for sufficiently small
$\epsilon>0$. The numerical value of $\epsilon$ is unimportant, and the needed
smallness of $\epsilon$ is usually clear from the context. 
The notation $a-$ is similarly defined.  This notation is particularly handy for interpolation inequalities.

 For any two quantities $X$ and $Y$, we denote $X \lesssim Y$ or $X= O(Y)$ if
$X \le C Y$ for some constant $C>0$. 
For any quantities $X_1$, $X_2$, $\cdots$, $X_N$, we denote by $C(X_1,\cdots,
X_N)$ a positive constant depending on $(X_1,\cdots, X_N)$.  

For convenience we collect the identities for hyperbolic functions:
\begin{align*}
& \tanh x = \frac {e^x -e^{-x}} {e^x+e^{-x}}, \; \cosh x = \frac {e^x+e^{-x}}2,
\, \operatorname{sech}(x) = \frac 1 {\cosh x} = \frac 2 {e^x +e^{-x} }, \notag \\
& \frac d {dx} \tanh x=1-\tanh^2 x= \operatorname{sech}^2(x), \;\;  \frac 1 {1-
\tanh^2 x} = \cosh^2 x.
\end{align*}

For any $f\in L^1(\Omega)$, we denote the mean value of $f$ as
\begin{align*}
\bar f = |\Omega|^{-1} \int_{\Omega} f(x) dx.
\end{align*}


\section{Analysis of the $g$ equation}
We first derive  the $g$ equation. Denote 
\begin{align*}
\begin{cases}
u_t = \Delta K, \\
K = -\nu  \Delta u -\theta_c u + \frac {\theta} 2 \log( \frac{1+u}{1-u} ), \quad u\in(-1,1).
\end{cases}
\end{align*}

Define $g= \frac 12 \log( \frac{1+u}{1-u})$. Then clearly
\begin{align*}
K= - \nu \Delta u - \theta_c u + \theta g.
\end{align*}
 Recall  that $u= \frac{e^g-e^{-g}}{e^g + e^{-g}} = \tanh(g)$. Then
\begin{align*}
&\partial_i u = \operatorname{sech}^2(g) \partial_i g = (1-u^2) \partial_i g,  \quad( 
\operatorname{sech}^2 x = 1-\tanh^2(x)),\\
& \Delta u =(1-u^2) \Delta g + (2u^3 -2u) |\nabla g|^2,\\
& \Delta \partial_i u = (1-u^2) \partial_i \Delta g + (2u^3 -2u) \partial_i (|\nabla g|^2)  -2u\partial_i u \Delta g + (6u^2-2) \partial_i u |\nabla g |^2 \notag \\
&\quad =  (1-u^2) \partial_i \Delta g + (2u^3-2u) \partial_i ( |\nabla g|^2) +(2u^3-2u) \partial_i g \Delta g  +(-6u^4+8u^2-2) \partial_i g |\nabla g|^2,\\
& \Delta^2 u = \notag \\
& \; (1-u^2)  \Delta^2 g + (2u^3-2u) \Delta ( |\nabla g|^2) +(2u^3-2u) \nabla \cdot (\nabla g \Delta g)  +(-6u^4+8u^2-2) \nabla \cdot(\nabla g |\nabla g|^2) \notag \\
&\qquad + (2u^3-2u) \partial_i g \Delta \partial_i g + (-6u^4+8u^2-2) \nabla g \cdot \nabla(|\nabla g|^2) + (-6u^4+8u^2-2) \nabla g \cdot ( \nabla g \Delta g)
\notag \\
& \qquad + (-24 u^3+16u) (1-u^2) |\nabla g|^4.
\end{align*}

Then the equation for $g$ takes the form
\begin{align}
g_t & = -\frac {\nu} {1-u^2} \Delta^2 u - \frac {\theta_c} {1-u^2} \Delta u + \frac {\theta} {1-u^2} \Delta g \notag \\
& = -\nu \Delta^2 g+ 2\nu u \Bigl( \Delta ( |\nabla g|^2) + \nabla \cdot ( \nabla g \Delta g) +\nabla g \cdot \nabla \Delta g \Bigr)  \notag \\
& \quad -\nu (6u^2-2) \Bigl( \nabla\cdot(\nabla g |\nabla g|^2) + \nabla g \cdot \nabla (|\nabla g|^2) + |\nabla g|^2 \Delta g                       \Bigr) \notag \\
& \quad  +\nu (24u^3-16 u) |\nabla g |^4 - \theta_c\Delta g +2\theta_c u |\nabla g |^2 + \frac {\theta} {1-u^2} \Delta g. \label{eq_g_1}
\end{align}

One need not worry about the term $1/(1-u^2)$ since
\begin{align*}
\frac 1 {1-u^2} = \frac 1 {1-\tanh^2 g}=\cosh^2(g).
\end{align*}

\begin{prop}[Local  wellposedness for the $g$-equation: subcritical data] \label{prop_model_1}
Let the initial data $g_0\in H^2(\Omega)$. There exists $T=T(\|g_0\|_{H^2}, \nu, \theta_c,\theta )>0$ and a unique solution $g\in C([0,T], H^2) \cap L_t^2 H^4$ to the equation \eqref{eq_g_1}. Furthermore due
to smoothing the solution has higher regularity, i.e. $g \in C((0,T], H^k)$ for any $k\ge 2$.
\end{prop}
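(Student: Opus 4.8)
The plan is to solve \eqref{eq_g_1} as an integral (mild) equation and run a contraction argument, using that the leading operator $-\nu\Delta^2$ generates the biharmonic heat semigroup $S(t)=e^{-\nu t\Delta^2}$ on $\mathbb{T}^2$. First I would record, via the Fourier multiplier $e^{-\nu t(2\pi)^4|k|^4}$, the smoothing bound $\|S(t)f\|_{H^{s+\sigma}}\lesssim t^{-\sigma/4}\|f\|_{H^s}$ for every $\sigma\ge0$, the zero mode being simply frozen and hence harmless. Because $H^2(\mathbb{T}^2)\hookrightarrow L^\infty$ in two dimensions, whenever $g\in H^2$ the coefficients $u=\tanh g$ and $1/(1-u^2)=\cosh^2 g$ appearing in \eqref{eq_g_1} are bounded smooth functions of $g$; this is what makes the equation a genuine semilinear perturbation of the biharmonic flow and what automatically yields strict phase separation as long as $\|g\|_{H^2}$ stays finite.

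Next I would rewrite the right-hand side of \eqref{eq_g_1} in the divergence form anticipated in the introduction, $-\nu\Delta^2 g+N(g)$, with $N(g)$ a finite sum of terms $\partial^2(\Pi_2)$ (with $\Pi_2$ of order $\le 1$ in $g$), $\partial(\Pi_1)$ and $\Pi_0$ (with $\Pi_1,\Pi_0$ of order $\le 2$), each a product of a bounded function of $g$ built from $\tanh g$ and $\cosh^2 g$ with derivatives of $g$. On the integral formulation $\Phi(g)(t)=S(t)g_0+\int_0^t S(t-s)N(g(s))\,ds$ one moves the outer derivatives onto the semigroup: a $\partial^2(\Pi_2)$ term contributes $\|S(t-s)\Pi_2\|_{H^4}\lesssim(t-s)^{-3/4}\|\Pi_2\|_{H^1}$ to the $H^2$ norm, a $\partial(\Pi_1)$ term contributes $(t-s)^{-3/4}\|\Pi_1\|_{L^2}$, and a $\Pi_0$ term contributes $(t-s)^{-1/2}\|\Pi_0\|_{L^2}$. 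In every case the power of $(t-s)$ is strictly below $1$, so the convolution integrals converge.

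The crux, and the main obstacle, is that the genuinely top-order products—typified by $(\partial g)^2\partial^2 g$—are not controlled in $L^2_x$ by $\|g\|_{H^2}$ alone, since $H^2(\mathbb{T}^2)$ does not embed into $W^{2,4}$; this is exactly the quasilinear feature of \eqref{eq_g_1}. I would resolve it by carrying the parabolic gain of regularity in the iteration space itself, working with the time-weighted norm $\|g\|_{X_T}=\sup_{0<t\le T}\|g(t)\|_{H^2}+\sup_{0<t\le T}t^{1/8}\|g(t)\|_{H^{5/2}}$, for which the free evolution is finite because $\|S(t)g_0\|_{H^{5/2}}\lesssim t^{-1/8}\|g_0\|_{H^2}$. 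Using $H^{5/2}(\mathbb{T}^2)\hookrightarrow W^{2,4}$ and $H^2\hookrightarrow W^{1,p}$ for all $p<\infty$, one bounds $\|(\partial g)^2\partial^2 g\|_{L^2}\lesssim\|\partial g\|_{L^8}^2\|\partial^2 g\|_{L^4}\lesssim t^{-1/8}\|g\|_{X_T}^3$, and the Beta-type integrals $\int_0^t(t-s)^{-\alpha}s^{-1/8}\,ds$, with $\alpha\in\{1/2,5/8,3/4,7/8\}$, all converge and, combined with the weight $t^{1/8}$, leave an overall positive power of $t$ in every term. Estimating $N(g)-N(h)$ the same way shows that $\Phi$ maps a ball of $X_T$ into itself and is a contraction for $T=T(\|g_0\|_{H^2},\nu,\theta_c,\theta)$ small; this gives the unique local solution and, by strong continuity of $S(t)$, membership in $C([0,T];H^2)$, while uniqueness in the stated class follows from the same Lipschitz bounds applied to a difference together with Gr\"onwall.

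Finally, the $L^2_tH^4$ bound and the smoothing $g\in C((0,T];H^k)$ would be obtained a posteriori. The weighted control gives $g\in L^2([0,T];H^{2+\sigma})$ for every $\sigma<2$, and in particular places $N(g)$ in $L^2([0,T];L^2)$; the $L^2$-maximal regularity of $\partial_t+\nu\Delta^2$ then upgrades this to $g\in L^2([0,T];H^4)$ (equivalently one runs a standard energy estimate in which $\tfrac{\nu}{2}\|\Delta^2 g\|_{L^2}^2$ absorbs the nonlinear contributions). The higher regularity is then a bootstrap: restarting the local theory from any $t_0\in(0,T)$ with the improved data, or iterating the smoothing estimate against the already-established nonlinear bounds, gains two derivatives at each stage and shows that the weighted quantities $t^{\sigma/4}\|g(t)\|_{H^{2+\sigma}}$ are finite on $(0,T]$ for every $\sigma$, whence $g\in C((0,T];H^k)$ for all $k\ge 2$.
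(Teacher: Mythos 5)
Your proposal is correct and follows essentially the same route as the paper: a mild-solution contraction for the divergence-form rewriting of \eqref{eq_g_1} using the biharmonic semigroup, followed by a bootstrap for the higher-order smoothing. The only (minor) difference is the device for the borderline products such as $(\partial g)^2\partial^2 g$: you recover them in $L^2$ by carrying the Kato-type weighted norm $t^{1/8}\|g(t)\|_{H^{5/2}}$, whereas the paper measures these terms in $L^{2-}$ and absorbs the resulting loss into the kernel through the exponents $(t-s)^{-3/4-}$; both are standard and equivalent in effect.
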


\begin{proof}
This is utterly standard, and we only sketch the details. To ease the notation we take $\nu=1$. 
Roughly speaking, the $g$-equation can be rearranged to take the form
\begin{align*}
\partial_t g = -\Delta^2 g + O( \partial ( e^{C_1g}  F_1(\tanh g, g,\partial g) \partial^2 g ) )+
O(\partial^2( e^{C_2 g} F_2(\tanh g, g,\partial g)))+\cdots,
\end{align*}
where $C_1$, $C_2$ are constants (we allow $C_1$, $C_2$ to be zero), $F_1$ and $F_2$ are polynomials, and $\cdots$ represent similar (and simpler) terms. 

For the ease of reading, we explain how this is done for the first  term. Other terms are similarly
treated. 
\begin{align*}
  u \partial^2 ( |\nabla g |^2)
&= \partial( u \partial (|\nabla g|^2) ) - \partial u  \partial( |\nabla g|^2) \notag \\
& =O(\partial^2 ( u |\nabla g|^2) ) + O(\partial( \partial u |\nabla g|^2))
 + O( \partial^2 u |\nabla g|^2) \notag \\
 &= O( \partial^2 ( u |\nabla g|^2 ) +
 O ( \partial(  (1-u^2)  (\partial g)^3) )+
O ((1-u^2) \partial^2 g (\partial g)^2)  \notag \\
& \qquad \qquad + O( u(1-u^2) (\partial g)^4). 
 \end{align*}
Note that all these terms can be re-written as
\begin{align*}
F_1(u) F_2(\partial g) \partial^2 g + \partial^l (F_3(u) F_4(\partial g)), \qquad 0\le l\le 2.
\end{align*}
where $F_i$ are polynomials.

In mild formulation (and dropping ``similar terms'' which are easier to handle), one
can write
\begin{align*}
g(t)& = e^{-t\Delta^2} g_0 + \int_0^t \partial e^{-(t-s)\Delta^2} ( e^{C_1g} F_1(\tanh g, g,\partial g)  \partial^2 g) ) ds \notag \\
& \qquad +  \int_0^t \partial^2 e^{-(t-s)\Delta^2} ( e^{C_2 g} F_2(\tanh g, g,\partial g) ) ds.
\end{align*}

One can then derive
\begin{align*}
\| g (t) \|_{H^2} & \lesssim \| g_0 \|_{H^2} + \int_0^t ((t-s)^{-\frac 14-} + (t-s)^{-\frac 34-} )
\| e^{C_1 g} F_1(\tanh g, g,\partial g) \partial^2 g \|_{2-} ds  \notag \\
& \qquad + \int_0^t ( (t-s)^{-\frac 12} \| e^{C_2 g} F_2(\tanh g, g,\partial g) \|_2 ds \notag \\
& \qquad + \int_0^t (t-s)^{-\frac 34-} \| \partial( e^{C_2 g} F_2(g,\partial g)) \|_{2-} ds. \notag
\end{align*}

By Sobolev embedding and the fact that $H^2$ is an algebra (in 2D),  we then get (below $\theta_1>0$, $\theta_2>0$,
$C>0$ are constants):
\begin{align*}
\max_{0\le t \le T} \| g(t)\|_{H^2} \lesssim \| g_0 \|_{H^2} + (T^{\theta_1}+T^{\theta_2}) e^{C \max_{0\le t \le T} \| g\|_{H^2} }.
\end{align*}

By taking $T$ sufficiently small, one can then get contraction in the ball
\begin{align*}
\{ g \in C([0,T], H^2):\;  \max_{0\le t \le T} \| g(t) \|_{H^2} \le \operatorname{const} \cdot \| g_0\|_{H^2} \}.
\end{align*}

The local solution is then easily constructed.

To get smoothing estimates, one can first estimate 
$\| t^{\frac {\eta} 4} |\nabla|^{\eta} g \|_{L_t^{\infty} H^2}$ for some sufficiently small $\eta>0$.
The smallness of $\eta$ is needed when we deal with the nonlinear term and absorb the fractional
derivative into the kernel whilst keeping the integrability in time. The factor $t^{\frac {\eta}4}$ is 
needed for the initial data. 
Bootstrapping then yields higher order smoothing estimates.
\end{proof}

\begin{thm}[Global wellposedness] \label{thm2}
Let the initial data $g_0 \in H^2(\Omega)$. Then the corresponding local solution $g$ constructed in Proposition \ref{prop_model_1}
exists globally in time.
\end{thm}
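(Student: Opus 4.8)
The plan is a standard continuation argument. By Proposition \ref{prop_model_1} the local solution exists on an interval whose length depends only on $\|g(t_0)\|_{H^2}$ and the fixed parameters $\nu,\theta,\theta_c$; consequently, to prove global existence it suffices to establish an a priori bound of the form $\sup_{t\in[0,T]}\|g(t)\|_{H^2}\le C(T,\|g_0\|_{H^2},\nu,\theta,\theta_c)$ on every finite interval $[0,T]$. Once such a bound is in hand the solution cannot leave $H^2$ in finite time and therefore extends to all of $[0,\infty)$; the higher regularity $g(t,\cdot)\in H^k$ for $t>0$ claimed in Theorem \ref{t0} then follows by applying the smoothing estimate of Proposition \ref{prop_model_1} on each subinterval.

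First I would extract the a priori information carried by the original conservation and dissipation structure. Writing $u=\tanh g$ and $\mu=-\nu\Delta u+f(u)$ with $f(u)=-\theta_c u+\theta g$, the mean $\bar u$ is conserved and the Ginzburg--Landau energy satisfies $\frac{d}{dt}\psi(u)=-\|\nabla\mu\|_2^2\le 0$. Since $F$ is bounded on $(-1,1)$, this yields a uniform-in-time bound $\sup_t\|u(t)\|_{H^1}\le C$ together with $\int_0^\infty\|\nabla\mu\|_2^2\,dt<\infty$. Testing the elliptic relation between $\mu$ and $u$ --- that is, using $\|\mu-\bar\mu\|_2\lesssim\|\nabla\mu\|_2$, expanding $\|\mu-\bar\mu\|_2^2=\nu^2\|\Delta u\|_2^2+\|f(u)-\overline{f(u)}\|_2^2+2\nu\int f'(u)|\nabla u|^2$, and absorbing the indefinite cross term $2\nu\int f'(u)|\nabla u|^2\ge -2\nu\theta_c\|\nabla u\|_2^2$ by the $H^1$ bound --- upgrades this to the space-time estimate $\int_0^T(\|\Delta u\|_2^2+\|f(u)\|_2^2)\,dt\le C(T)$, and hence, since $u$ is bounded, $\int_0^T\|g\|_2^2\,dt\le C(T)$.

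The heart of the matter is then to promote these weak, $u$-side bounds to the pointwise-in-time $H^2$ control of $g$ required by the continuation criterion, which I would do by a differential-inequality/bootstrap argument run directly on \eqref{eq_g_1}. Here the decisive structural feature --- already visible in the derivation of \eqref{eq_g_1} --- is that the factor $1/(1-u^2)$ cancels identically against the common factor $(1-u^2)$ appearing in $\Delta u$ and $\Delta^2 u$, so that every genuinely nonlinear term in the $g$-equation carries only a \emph{bounded} coefficient polynomial in $u=\tanh g$; the sole surviving singular coefficient is the one in $\tfrac{\theta}{1-u^2}\Delta g=\theta\cosh^2(g)\,\Delta g$, and this term has the favorable (dissipative) sign, e.g.\ $\int g\cdot\theta\cosh^2(g)\Delta g=-\theta\int\cosh^2(g)|\nabla g|^2-2\theta\int g\cosh(g)\sinh(g)|\nabla g|^2\le 0$. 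The plan is thus to differentiate $\|g\|_{H^2}^2$ in time, use the good fourth-order dissipation coming from $-\nu\Delta^2 g$ supplemented by the sign-favorable second-order diffusion to absorb the high-derivative nonlinear terms (the cubic and quartic gradient terms and the terms through third order in $\partial g$) via Gagliardo--Nirenberg interpolation in two dimensions, and to feed the a priori integrability of $\|g\|_2$ and $\|u\|_{H^2}$ from the previous step into the lower-order contributions so as to close a Gronwall inequality with time-integrable coefficients on $[0,T]$.

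The main obstacle I anticipate is precisely the reconciliation of the two sides of the estimate: the conservation-law bounds are naturally expressed through $u$ and do not by themselves control $g$ in any strong norm, because $\nabla u=\operatorname{sech}^2(g)\,\nabla g$ can mask an arbitrarily large $\nabla g$ wherever $|g|$ is large. Equivalently, the only mechanism by which $g$ could escape to infinity is through the exponentially growing coefficient $\cosh^2(g)$, so the bootstrap must extract quantitative control of this coefficient from the otherwise $u$-adapted estimates. Exploiting the favorable sign of the singular diffusion term, the boundedness of all nonlinear coefficients, and the subcritical two-dimensional interpolation so as to keep the nonlinear contributions strictly below the available dissipation is, I expect, where the real technical work lies, and is what the remark in the introduction that ``this part of the argument is technical'' refers to.
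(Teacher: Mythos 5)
There is a genuine gap. You correctly isolate the crux of the problem --- the conservation-law bounds live on the $u$ side, and nothing in them controls the exponentially large coefficient $\cosh^2(g)=1/(1-u^2)$, since $\nabla u=\operatorname{sech}^2(g)\nabla g$ can hide an arbitrarily large $\nabla g$ where $|g|$ is large --- but you then leave exactly this point unresolved, deferring it to ``where the real technical work lies.'' The direct $H^2$ energy estimate you propose on \eqref{eq_g_1} cannot close with the information you have assembled. The sign-favorable computation $\int g\,\cosh^2(g)\Delta g\le 0$ is a statement at the $L^2$ level; at the $H^2$ level one must pair $\cosh^2(g)\Delta g$ against $\Delta^2 g$, and the resulting commutator $\int \cosh(g)\sinh(g)\,\nabla g\cdot\nabla\Delta g\,\Delta g$ is neither sign-definite nor controllable without a priori bounds on $e^{C|g|}$. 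Likewise, the ``bounded-coefficient'' nonlinearities are up to quartic in $\nabla g$, and in 2D absorbing $\|\nabla g\|_4^4\lesssim\|\nabla g\|_2^2\|\Delta g\|_2^2$ into the biharmonic dissipation requires at least $L_t^\infty H^1$ control of $g$ itself --- which you do not have; your space-time bound $\int_0^T\|g\|_2^2\,dt\le C(T)$ (which moreover silently uses a bound on $\bar g$ that has to be proved separately from conservation of $\bar u$ and $|\bar u|<1$) is far too weak to serve as the Gronwall input.

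The paper closes precisely this gap by a mechanism absent from your proposal. It works with the chemical potential $K=\mu$ rather than with $g$: $K$ satisfies $K_t=-\nu\Delta^2K-\theta_c\Delta K+\frac{\theta}{1-u^2}\Delta K$, in which the singular coefficient sits in a \emph{dissipative} position (testing against $-\Delta K$ produces $-\theta\int\frac{1}{1-u^2}|\Delta K|^2\le 0$), yielding $\|\nabla K\|_{L_t^\infty L_x^2}\lesssim 1$ with no knowledge of $g$. The $u$-to-$g$ transfer is then accomplished through the elliptic relation $K=-\nu\Delta u-\theta_c u+\theta g$ tested against $|g|^{p-2}g$: since $\nabla u=(1-u^2)\nabla g$ points along $\nabla g$ with a nonnegative factor, the term $-\nu\Delta u$ contributes a nonnegative quantity, and one obtains $\|g\|_p\lesssim\|K\|_p+1\lesssim\sqrt p$ uniformly in $p$, hence $\|e^{C|g|}\|_{\infty-}\lesssim 1$. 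This exponential moment bound is exactly the quantitative control of $\cosh^2(g)$ that you identified as necessary but did not produce; with it in hand, a Duhamel argument for $K$ gives $\|K\|_\infty\lesssim1$, a maximum-principle argument (again exploiting the same monotone alignment of $\nabla u$ and $\nabla g$) gives $\|g\|_\infty\lesssim1$, and the uniform $H^2$ bound and continuation follow. Without this transfer device --- or some substitute for it --- your bootstrap does not close.
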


\begin{proof}
By using the smoothing effect, we may assume WLOG that the initial data $g_0 \in H^k(\Omega)$ for 
all $k\ge 2$. For notational simplicity we shall set $\nu=1$.

We divide the proof into several steps.

1) From energy conservation we have
\begin{align*}
\| |\nabla|^{-1} u_t \|_{L_{tx}^2} \lesssim 1.
\end{align*}
This implies
\begin{align*}
\| \nabla K\|_{L_{tx}^2} \lesssim 1.
\end{align*}

2) Easy to check that $K$ satisfies the equation:
\begin{align*}
K_t =  -\Delta^2 K -\theta_c \Delta K + \frac{\theta}{1-u^2} \Delta K.
\end{align*}
Multiplying both sides by $-\Delta K$ and integrating by parts, we get
\begin{align*}
\frac 12 \partial_t ( \| \nabla K\|_2^2) &\le - \| \Delta \nabla K\|_2^2 + \theta_c \| \Delta K\|_2^2 \notag \\
&\le - \|\Delta \nabla K \|_2^2 + \theta_c \| \Delta \nabla K\|_2 \| \nabla K\|_2 \notag \\
& \lesssim \| \nabla K\|_2^2.
\end{align*}
By using $\| \nabla K\|_{L_{tx}^2} \lesssim 1$, one can then easily get the uniform bound
\begin{align*}
\| \nabla K\|_{L_t^{\infty} L_x^2} \lesssim 1.
\end{align*}

3) Control of $\| g- \bar g \|_{L_t^{\infty} L_x^2}$. Recall 
\begin{align*}
K= -\Delta u - \theta_c u + \theta g.
\end{align*}
Since $\nabla u =(1-u^2) \nabla g$, we have
\begin{align*}
\int (K-\bar K) (g-\bar g) dx
&= \int K \cdot  (g-\bar g) dx  \notag \\
&= \int \nabla u \cdot \nabla g dx - \theta_c \int  u (g-\bar g) dx
+ \theta \int (g-\bar g)^2 dx \notag \\
& \ge - \theta_c \int  u (g-\bar g) dx
+ \theta \int (g-\bar g)^2 dx. 
\end{align*}
A simple Cauchy-Schwartz using the fact $\| K-\bar K\|_2 \lesssim \| \nabla K \|_2$  then easily
yields 
\begin{align*}
\| g -\bar g\|_{L_t^{\infty} L_x^2} \lesssim 1.
\end{align*}

4) Control of the mean values $\bar g $ and $\bar K$.
WLOG consider the case $\bar g=M  \ge 10$.  Since
\begin{align*}
\int_{\Omega} (g-M)^2  dx \lesssim 1,
\end{align*}
we get
\begin{align*}
\operatorname{Leb}\{x\in \Omega:\, g(x)\le M/2 \} \lesssim M^{-2}.
\end{align*}

Now
\begin{align*}
\bar u \operatorname{Leb}(\Omega) &= \int_{g(x)\ge \frac M2} dx + \int_{g(x)\ge \frac M2} (u(x)-1) dx +
\int_{g(x)<\frac M2} u(x) dx \notag \\
&=\operatorname{Leb}(\Omega) + \int_{g(x)\ge \frac M2} (u(x)-1) dx + \int_{g(x)<\frac M2} (u(x)-1) dx \notag \\
& = \operatorname{Leb}(\Omega) + O(e^{-\frac M4}) +O(\frac 1{M^2}).
\end{align*}

Since $\bar u$ is preserved in time and $|\bar u|<1$, the above easily implies that $M\lesssim 1$. Thus
we have proved $|\bar g|\lesssim 1$.

For the control of $\bar K$, recall that
\begin{align} \label{tmp_Kug}
K= -\Delta u - \theta_c u + \theta g.
\end{align}
Clearly then $|\bar K|= |-\theta_c \bar u + \theta \bar g| \lesssim 1$.

5) Control of $\|e^{C|g|}\|_{\infty-}$, $\| \frac 1 {1-u^2} \|_{\infty-}$, $\| \partial( \frac {1}{1-u^2} ) \|_{\infty-}$
and $\| \partial^2 ( \frac 1 {1-u^2} ) \|_{\infty-}$.

First since $K\in L_t^{\infty} \dot H^1$ and we have the control of $\bar K$, it is easy to check that
\begin{align*}
\| K \|_p \lesssim \sqrt p, \quad \forall\, 2\le p<\infty.
\end{align*}
By using \eqref{tmp_Kug} (multiply both sides by $|g|^{p-2}g$ and integrate by parts),  we then get
\begin{align*}
\| g \|_{p} \lesssim \sqrt p.
\end{align*}
This implies for any $C>0$,
\begin{align*}
\| e^{C|g|} \|_{\infty-} \lesssim 1.
\end{align*}
Since $\frac 1 {1-u^2} \lesssim e^{2|g|}$, we also get $\| \frac 1 {1-u^2} \|_{\infty-} \lesssim 1$.

By using \eqref{tmp_Kug}, we also get $\| \Delta u \|_{\infty-} \lesssim 1$. This easily implies
\begin{align*}
\| \partial^2 ( \frac 1 {1-u^2} ) \|_{\infty-} \lesssim 1.
\end{align*}

6) Control of $\| K\|_{\infty}$, $\|g \|_{\infty}$, and $\|g \|_{H^2}$.

Let $t_0\ge 0$ be arbitrary. We then write
\begin{align*}
K(t) = e^{-(t-t_0) \Delta^2} K(t_0) - \theta_c  \int_{t_0}^t \Delta e^{-(t-s)\Delta^2} K ds
+ \theta \int_{t_0}^{t} e^{-(t-s) \Delta^2} ( \frac 1 {1-u^2} \Delta K) ds.
\end{align*}
Note that we can rewrite
\begin{align*}
\frac 1 {1-u^2} \Delta K= \Delta ( \frac 1 {1-u^2} K) + O ( \partial( \partial(\frac 1 {1-u^2}) K)).
\end{align*}
Taking $t=t_0+1$ and using the bounds on $\|K\|_{\infty-}$, $\| \partial^j( \frac 1 {1-u^2}) \|_{\infty-}$, $0\le j\le 2$,
 we then get
\begin{align*}
\| K(t_0+1)\|_{\infty} \lesssim 1.
\end{align*}
This implies $\| K\|_{L_t^{\infty} L_x^{\infty}} \lesssim 1$. By using \eqref{tmp_Kug} and a maximum principle argument,
we also get $\|g \|_{L_{tx}^{\infty}} \lesssim 1$ and thus $\| \frac 1 {1-u^2} \|_{\infty} \lesssim 1$. Since $g=\frac 12 \log(\frac {1+u}{1-u})$, $\| \frac 1 {1-u^2} \|_{\infty} \lesssim 1$
and $\| u \|_{H^2} \lesssim 1$, we get $\|g \|_{H^2} \lesssim 1$.

Since we have uniform control of $\|g\|_{H^2}$,  by using the local theory and a bootstrap argument, we can then extend $g$ globally in time.
\end{proof}


\frenchspacing
\bibliographystyle{plain}

\end{document}